\newtheorem{theorem}{Theorem}
\newtheorem{corollary}[theorem]{Corollary}
\theoremstyle{remark}
\numberwithin{equation}{section}
\newcommand\mL{L\kern-0.08cm\char39}
\newcommand{\rad}{\mathop{\rm rad}}
\newcommand{\diam}{\mathop{\rm diam}}
\begin{document}

\begin{large}

\title[]{On cycles in graphs with specified radius and diameter}

\author[P. Hrn\v ciar]{Pavel Hrn\v ciar}
\address{Department of Mathematics, Faculty of Natural Sciences,
          Matej Bel University, Tajovsk\'eho 40, 974 01 Bansk\'a Bystrica,
          Slovakia}
\email{Pavel.Hrnciar@umb.sk}

\thanks{The author was supported by the Slovak Grant Agency under
        the grant number VEGA 1/1085/11.}

\subjclass[2000]{05C12}

\keywords{Radius, diameter, cycle, circumference, path, geodesic cycle}

\begin{abstract}
Let $G$ be a graph of radius $r$ and diameter $d$ with $d\leq 2r-2$. We show that $G$ contains a cycle of  length at least $4r-2d$, i.e. for its circumference it holds $c(G)\geq 4r-2d$. Moreover, for all positive integers $r$ and $d$ with $r\leq d\leq 2r-2$ there exists a graph of radius $r$ and diameter $d$ with circumference $4r-2d$.
\end{abstract}

\maketitle



For a connected graph $G$, the \emph{distance} $d_G(u,v)$ or briefly $d(u,v)$ between a pair of vertices $u$ and $v$ is the length of a shortest path joining them. The distance between a vertex $u\in V(G)$ and a subgraph $H$ of $G$ will be denoted by $d(u,H)$, i.e. $d(u,H)=\min\{d(u,v); v\in V(H)\}$.
The \emph{eccentricity} $e_G(u)$ (briefly $e(u)$) of a vertex $u$ of $G$ is the distance of $u$ to a vertex farthest from $u$ in $G$, i.e. $e_G(u)=\max\{d_G(u,v); v\in V(G)\}$.
The \emph{radius} $\rad G$ of $G$ is the minimum eccentricity among the vertices of $G$ while the \emph{diameter} $\diam G$ of $G$ is the maximum eccentricity.
The \emph{circumference} of a graph $G$, denoted $c(G)$, is the length of any longest cycle in $G$.

A path $P$ (a cycle $C$) in $G$ is called \emph{geodesic} if for any two vertices of $P$ (of $C$) their distance in $P$ (in $C$) equals their distance in $G$.
A nontrivial connected graph with no cut-vertices is called a \emph{nonseparable graph}. A \emph{block} of a graph $G$ is a maximal nonseparable subgraph of $G$.

A connected unicyclic graph $G$ with the cycle $C$ is called a \emph{sun-graph} (see \cite{Hav_Hrn_Mon_2}) if $\deg_G(u)\leq 3$ for $u\in V(C)$ and $\deg_G(u)\leq 2$ for $u\in V(G)\setminus V(C)$. A $u-v$ path $P$ in a sun-graph $G$ is called a \emph{ray} if $V(P)\cap V(C)=\{u\}$ and $\deg_G(v)=1$. A sun-graph with the cycle $C_m$ of length $m$ and with $m$ rays of length $k$ will be denoted by $S_{m,k}$.

\vskip 3mm
 
In what follows we answer a question that was posed several decades ago in \cite{Ost}:
\newline
"How large a cycle must there be in a graph of radius $m$ and diameter $n$? This question is also open. For radius 3 and diameter 4, the graph must have a cycle of length at least 4, which can be verified by brute force techniques \dots . The situation in general is unclear." 

Our main result is the following theorem.

\begin{theorem}
Let $G$ be a graph of radius $r$ and diameter $d$ with $d\leq 2r-2$. Then $c(G)\geq 4r-2d$.
\end{theorem}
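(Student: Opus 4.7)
The plan is to locate an eccentric vertex off a diametral path and use it to build a long cycle. I would fix a diametral pair $u,v$ with $d(u,v)=d$, a shortest $u$-$v$ path $P=w_0 w_1\cdots w_d$ with $w_0=u$, $w_d=v$, and set $w=w_m$ for $m=\lfloor d/2\rfloor$. Because $d\leq 2r-2$, one has $\lceil d/2\rceil\leq r-1$, so the $P$-distance from $w$ to any vertex of $P$ is at most $r-1$. But $e_G(w)\geq r$, so there is an eccentric vertex $x$ of $w$ with $d(w,x)\geq r$, and necessarily $x\notin V(P)$.

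Next I would exploit the triangle $\{u,v,x\}$. Since $u$ and $v$ realize the diameter, $d(u,x),d(v,x)\leq d$; on the other hand, $d(u,x)\geq d(w,x)-m$ and $d(v,x)\geq d(w,x)-(d-m)$ by the triangle inequality, so
\[
d(u,x)+d(v,x)\geq 2\,d(w,x)-d\geq 2r-d.
\]
Adding $d(u,v)=d$, the three pairwise distances in $\{u,v,x\}$ sum to at least $2r$, and since $r\leq d$ this sum is at least $4r-2d$.

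The last step is to promote this metric inequality to an actual cycle of length at least $4r-2d$. My plan is to choose $u$, $v$, and an eccentric vertex $x$ of some interior vertex $w$ of a shortest $u$-$v$ path, together with shortest paths between each pair in $\{u,v,x\}$, so that these three paths are pairwise internally disjoint; their union is then a (geodesic) cycle of length equal to the sum $d(u,x)+d(v,x)+d\geq 4r-2d$. Whenever two of the shortest paths share an internal vertex $z$, I would use $e_G(z)\geq r$ to reroute one path via an eccentric vertex of $z$, producing a cycle that is no shorter, or to replace the paths by internally disjoint shortest paths of the same total length. The assumption $d\leq 2r-2$ reappears here to guarantee that such reroutings always escape the current configuration, echoing the way it was used to place $x$ off $P$.

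The main obstacle is precisely this intersection analysis. The three shortest paths among $u$, $v$, $x$ need not be internally disjoint, and a closed walk of length $L$ does not in general contain a cycle anywhere near $L$. Pushing the bound through therefore rests on a careful case analysis of how the three paths can overlap and on extracting, from any overlap, a substructure (ultimately a geodesic cycle, using the terminology introduced above) that still carries a cycle of length at least $4r-2d$. This is where the technical heart of the proof lies; the sun-graph examples $S_{m,k}$ show that no slack is available and that the argument has to be tight.
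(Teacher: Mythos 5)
Your metric setup is correct as far as it goes: the choice of $w=w_{\lfloor d/2\rfloor}$, the observation that $d\leq 2r-2$ forces an eccentric vertex $x$ of $w$ off $P$, and the inequality $d(u,x)+d(v,x)+d(u,v)\geq 2r\geq 4r-2d$ all hold. But the step you defer --- converting this sum of distances into an actual cycle of length at least $4r-2d$ --- is not a technical loose end; it is the entire content of the theorem, and the specific plan you sketch for it fails. Nothing in your argument rules out that the union of the three geodesics is a tree (all three can pass through a common median vertex), in which case it contains no cycle at all; the same distance-sum inequality is valid in trees, so the hypothesis $d\leq 2r-2$ must enter the cycle-extraction step in some essential way you have not supplied. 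Your fallback of replacing the geodesics by pairwise internally disjoint ones of the same length is impossible already in the extremal examples: in $S_{4r-2d,d-r}$ the eccentric vertex $x$ is the end of a ray, so every $u$--$x$ geodesic and every $v$--$x$ geodesic passes through the cycle vertex at which that ray is attached, i.e.\ they necessarily share an internal vertex. In that same example the three pairwise distances sum to far more than $4r-2d$ while the longest cycle has length exactly $4r-2d$, which confirms your own remark that a closed walk of length $L$ need not contain a cycle of comparable length. The ``rerouting via an eccentric vertex of $z$'' idea is not developed enough to assess, and no mechanism is given by which it would terminate or preserve length.

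For comparison, the paper avoids this intersection analysis entirely and argues by contradiction. If $c(G)<4r-2d$, then since any two vertices of a block $B$ containing a cycle lie on a common cycle, $\diam B\leq 2r-d-1\leq r-1$. Taking $u$ farthest from $B$, with nearest block-vertex $u_B$ and $a=d(u,u_B)$, either $a\leq r-1$, in which case a short case analysis yields $e(u_B)\leq r-1$, contradicting $\rad G=r$; or $a\geq r$, in which case a new cycle is exhibited in the component of $G-u_B$ containing $u$ and the whole argument is iterated on a strictly smaller subgraph. To salvage your approach you would need a genuinely new device for extracting a long cycle from the metric data; the block-based contradiction is precisely that device in the paper.
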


\begin{proof}
Since $d\leq 2r-2$, $G$ is not a tree. Let $C$ be a cycle of $G$ and $B$ be a block of $G$ containing $C$. Suppose, contrary to our claim, that $c(G)<4r-2d$. Since $B$ is a nonseparable subgraph of $G$, every two vertices of $B$ lie on a common cycle (see \cite[Theorem 1.6]{Buc-Har-1}). We get $\diam B\leq 2 r-d-1\leq r-1$ (and so $B\neq G$).

Let $u$ be a vertex such that $d(u,B)=\max \{d(v,B); v\in V(G)\}$ and let $u_B\in V(B)$ be a vertex with $d(u,u_B)=d(u,B)$. Evidently, $u_B$ is a cut-vertex of $G$. 

Let $G_1$ be a component of $G-u_B$ containing the vertex $u$. Put $d(u,u_B)=a$. We distinguish two cases.

\vskip 1.5mm
(1) $a\leq r-1$

Let $v$ be a vertex of $G$. If $v\in V(G_1)$ then $d(v,u_B)\leq a\leq r-1$. If $v\in V(B)$ then $d(v,u_B)\leq\diam B\leq r-1$. Let, finally, $v\in V(G)\setminus (V(G_1)\cup V(B))$ and $v_B\in V(B)$ be a vertex such that $d(v,v_B)=d(v,B)$. Evidently, $v_B$ is a cut-vertex of $G$ and $d_G(u_B,v_B)=d_B(u_B,v_B)$. Denote $d(v,v_B)=b$ and $d(u_B,v_B)=c$. Suppose first that $b+c\geq r$. We have $c\leq\diam B\leq 2r-d-1$ and $b\leq a$. Then $d(u,v)=d(u,u_B)+d(u_B,v_B)+ d(v_B,v)=a+c+b\geq 2b+c\geq 2(r-c)+c\geq 2r-(2r-d-1)=d+1$. Since $\diam G=d$, we get $b+c\leq r-1$ and so $d(u_B,v)\leq r-1$. Finally, we have $e(u_B)\leq r-1$, a contradicton.
\vskip 1.5mm
(2) $a\geq r$

Let $u_1$ be a vertex of a geodesic $u-u_B$ path $P^1$ with $d(u,u_1)=r-1$. If $w$ is a vertex from $V(G)\setminus V(G_1)$ then $u_B$ is on a geodesic $w-u_1$ path and we get $d(w,u_1)\leq r-1$ (since $d(w,u)\leq 2r-2$). Since $e(u_1)\geq r$ (otherwise we have a contradiction), there is a vertex $v\in V(G_1)$ such that $d_{G_1}(v,u_1)=d_G(v,u_1)\geq r$. Let $P^2$ be a geodesic $v-u_B$ path and let $v_1$ be the first vertex of $P^2$ which is on $P^1$. Since $d(v,u_B)\leq d(u,u_B)$, we get $d(u_B,v_1)<d(u_B,u_1)$. Let $P^3$ be a geodesic $v-u$ path and let $v_2$ be the first of its vertices which is on $P^1$. It is obvious (since $d(v,u)\leq 2r-2$) that $d(u_B,v_2)>d(u_B,u_1)$. Evidently, there is a cycle $C'$ such that $\{v_1,v_2\}\subseteq V(C')$.

Let $G_2$ be a subgraph of $G$ induced by the set $V(G_1)\cup\{u_B\}$. Let $w\in V(G)\setminus V(G_2)$ be such a vertex that $d(w,u_B)=\max\{d(x,u_B); x\in V(G)\setminus V(G_2)\}$ and $P$ be a geodesic $w-u_B$ path. Consider a graph $G'$ for which $V(G')=V(G_2)\cup V(P)$ and $E(G')=E(G_2)\cup E(P)$. It is obvious that $|V(G')|<|V(G)|$ and if there is a vertex $z\in V(G')$ with $e_{G'}(z)\leq r-1$ then $e_G(z)\leq r-1$, too.

\vskip 2mm

We can repeat the previous considerations with the graph $G'$ and its block $B'$ containing the cycle $C'$. It is clear now that after a finite number of the described steps we get a contradiction.
\end{proof}

\begin{corollary}
If $G$ is a graph with $\rad G=r$ and $\diam G\leq 2r-2$, then $G$ contains a cycle of length at least 4, i.e. $c(G)\geq 4$.
\end{corollary}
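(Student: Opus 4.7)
The plan is simply to apply Theorem~1 directly and then perform one elementary inequality. The hypothesis $\diam G \leq 2r-2$ of the corollary is precisely the hypothesis of the theorem, so the theorem yields $c(G) \geq 4r-2d$. It then suffices to observe that under the assumption $d \leq 2r-2$, we have $2d \leq 4r-4$, and therefore $4r-2d \geq 4$. Combining the two inequalities gives $c(G) \geq 4$.

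There is essentially no obstacle: all the work has already been done in the proof of Theorem~1. The only reason to record this as a separate statement is its historical significance, since it resolves the specific $r=3$, $d=4$ case that was singled out in Ostrand's original question quoted in the introduction (for those parameters $4r-2d = 4$, matching the brute-force verification he mentioned). One could also note, as a sanity check that the corollary is non-vacuous, that the hypothesis $d \leq 2r-2$ already forces $G$ not to be a tree (a tree satisfies $d \geq 2r-1$ in the relevant range), so $G$ must contain at least one cycle; the theorem upgrades this from "some cycle exists" to "a cycle of length at least $4$ exists."

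Finally, one might remark on sharpness: the second half of the paper's abstract asserts that for every admissible pair $(r,d)$ with $r \leq d \leq 2r-2$ there is a graph realizing $c(G) = 4r-2d$, so specializing to $d = 2r-2$ produces graphs with $c(G) = 4$, showing the corollary's bound cannot be improved in general.
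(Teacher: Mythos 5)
Your proposal is correct and is exactly the argument the paper intends: the corollary is an immediate consequence of Theorem~1, since $d\leq 2r-2$ gives $c(G)\geq 4r-2d\geq 4r-2(2r-2)=4$. Your added remarks on non-vacuousness and on sharpness via the $d=2r-2$ sun-graphs are consistent with the paper's discussion and do not change the substance.
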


\begin{corollary}
If $c(G)=3$ and $\rad G=r$, then $\diam G\in\{2r-1, \,2r\}$.
\end{corollary}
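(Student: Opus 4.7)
The plan is to deduce the corollary directly from the main theorem, supplemented by the standard inequality $\diam G \leq 2\rad G$ valid for every connected graph. The key observation is that the theorem, read contrapositively, supplies the lower bound on $d$, while the radius/diameter inequality supplies the matching upper bound.

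First I would establish $d \geq 2r-1$. Suppose for contradiction that $d \leq 2r-2$. Since $c(G) = 3$ in particular means $G$ contains a cycle (and is not a tree), the hypothesis of the main theorem applies, yielding $c(G) \geq 4r - 2d \geq 4r - 2(2r-2) = 4$. This contradicts the assumption $c(G)=3$, so we must have $d \geq 2r-1$.

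Next I would observe the upper bound $d \leq 2r$, which is the standard triangle-inequality argument: pick any central vertex $z$, so that $e(z) = r$; then for any two vertices $u,v \in V(G)$ one has $d(u,v) \leq d(u,z) + d(z,v) \leq 2 e(z) = 2r$, hence $d = \diam G \leq 2r$. Combining both bounds yields $d \in \{2r-1, 2r\}$, which is the desired conclusion.

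There is no real obstacle here; essentially all the work has already been done by the main theorem. The only non-trivial ingredient beyond the theorem is the well-known bound $\diam G \leq 2\rad G$, whose one-line proof is reproduced above for completeness.
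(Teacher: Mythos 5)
Your proof is correct and matches the paper's intent: the corollary is stated without proof as an immediate consequence of Theorem 1 (contrapositive gives $d\geq 2r-1$) combined with the standard bound $\diam G\leq 2\rad G$. Nothing further is needed.
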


\vskip 0.5cm

For all positive integers $r$ and $d$ satisfying $r\leq d\leq 2r-2$ there exists an infinite number of graphs of radius $r$, diameter $d$ and circumference $4r-2d$. One of these graphs is $C_{2r}$ for $d=r$. If $d>r$, one of these graphs is   $S_{4r-2d,d-r}$, i.e. a sun-graph with the cycle $C_{4r-2d}$ and with $4r-2d$ rays of length $d-r$ (see Figure 1a for $r=3$, $d=4$ and Figure 1b for $r=5$, $d=7$). 
\vskip 2mm
\begin{figure}[H]
\centering
\includegraphics[trim = 5cm 20cm 5cm 4cm, clip]{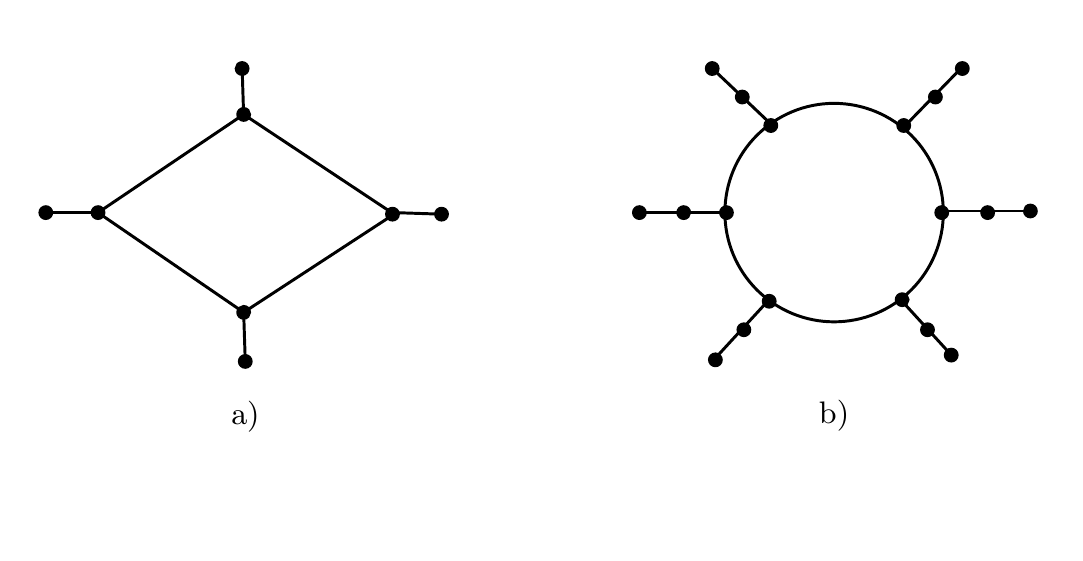}
\caption{}
\end{figure}
\vskip 2mm

\noindent
Now it is a simple matter to find infinite classes of graphs with mentioned properties (see Figure 2 for an inspiration).

\vskip 2mm
\begin{figure}[H]
\centering
\includegraphics[trim = 5cm 20cm 5cm 4cm, clip]{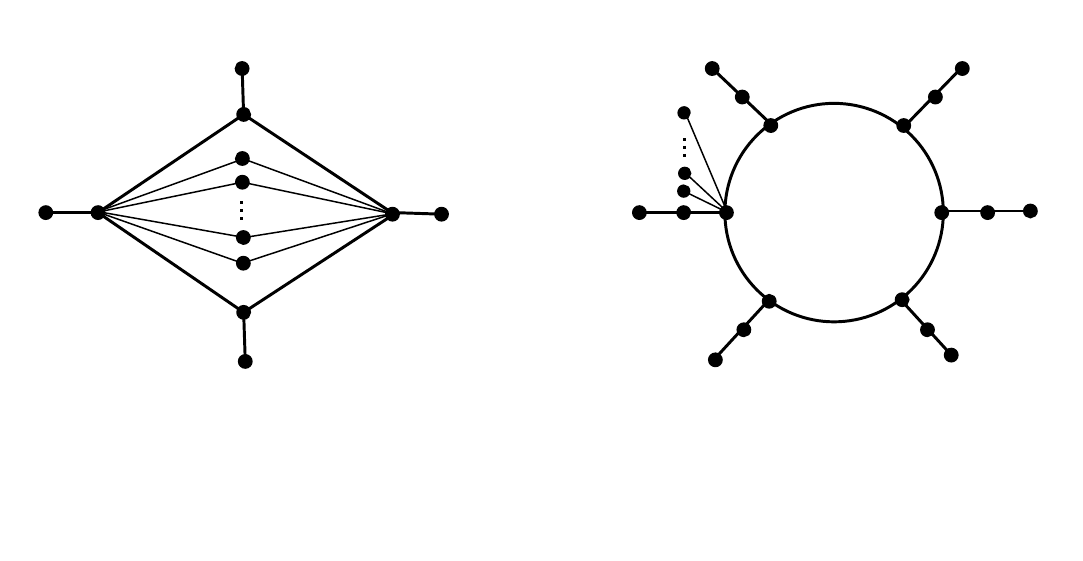}
\caption{}
\end{figure}
\vskip 2mm

It is known that if a graph $G$ with radius $r$ and diameter $d\leq 2r-2$ has at most $3r-2$ vertices, then it holds $c(G)\geq 2r$. This fact is a consequence of the following theorem (see \cite{Hav_Hrn_Mon_2}).

\begin{theorem}\label{T2:C2r_C2r+1}\cite{Hav_Hrn_Mon_2} 
Let $G$ be a graph with $\rad G=r$, $\diam G\leq 2r-2$, on at most $3r-2$ vertices. Then $G$ contains a geodesic cycle of length $2r$ or $2r+1$.
\end{theorem}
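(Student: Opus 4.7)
The plan is to construct the desired geodesic cycle as the union of two internally disjoint geodesic paths sharing their endpoints. I would start from a central vertex $c$ with $e(c)=r$, pick $u$ with $d(c,u)=r$, and observe that $r\leq e(u)\leq d\leq 2r-2$. Then I would look for a suitable target vertex $x$ with $d(u,x)\in\{r,r+1\}$ (for example $c$ itself, or a vertex on a geodesic from $u$ to a farthest vertex $v$, selected so that $d(u,x)$ lies in the required range). The cycle $C$ formed by two internally disjoint geodesic $u$--$x$ paths then has length $2r$ or $2r+1$, provided such a pair of paths can be located.

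The vertex bound $|V(G)|\leq 3r-2$ is precisely what forces the existence of two internally disjoint geodesics. A BFS from $c$ has $r+1$ non-empty levels $L_0,\ldots,L_r$, so the total vertex budget of $3r-2$ leaves fewer than three vertices per level on average. If no two internally disjoint $u$--$x$ geodesics existed for any viable $x$, every geodesic emanating from $u$ toward the center would share a common cut vertex $z$; counting the vertices on either side of $z$ while respecting $\diam G\leq 2r-2$ and $e(c)=r$ would then push $|V(G)|$ above $3r-2$, a contradiction. This forces two disjoint geodesics to exist, and through a parity analysis of their lengths pins down the cycle length to $2r$ or $2r+1$ rather than $2r+2$.

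The main obstacle is verifying that the resulting cycle $C$ is indeed geodesic, i.e.\ $d_G(a,b)$ equals the cycle distance for every pair $a,b\in V(C)$. Since the two constituent paths are geodesics by construction, each arc of $C$ is already a shortest path in $G$ between its endpoints, so the only risk is an external shortcut that shortens the distance between vertices on opposite arcs. I would rule out such a shortcut by contradiction: a shortcut of length less than the shorter arc would, combined with the arcs of $C$, produce either a strictly shorter cycle through the chosen vertices (contradicting the minimality implicit in the construction) or a vertex pair realising an eccentricity exceeding $d$, violating the diameter bound. Tuning this argument, and in particular the choice of $x$, is the delicate step where the hypotheses $d\leq 2r-2$ and $|V(G)|\leq 3r-2$ interact most tightly, and is where I expect the bulk of the work to lie.
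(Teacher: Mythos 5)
First, note that the paper does not prove this statement at all: Theorem~\ref{T2:C2r_C2r+1} is imported verbatim from \cite{Hav_Hrn_Mon_2}, so there is no in-paper proof to measure your attempt against; it has to stand on its own. It does not. The most concrete flaw is structural: two internally disjoint \emph{geodesic} $u$--$x$ paths necessarily both have length $d(u,x)$, so their union is a cycle of even length $2d(u,x)$. Your construction therefore cannot produce a cycle of length $2r+1$, and if you take $d(u,x)=r+1$ you get $2r+2$, which is precisely the length the theorem's conclusion excludes. The ``parity analysis'' you invoke to pin the length down to $2r$ or $2r+1$ has no mechanism behind it; to obtain a geodesic $C_{2r+1}$ one needs two arcs of lengths $r$ and $r+1$ between vertices at distance $r$, where the longer arc is \emph{not} a geodesic, and nothing in your setup produces or controls such an arc.

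The two remaining steps you flag as ``where the work lies'' are in fact the entire theorem, and the arguments you sketch for them do not close. For geodesicity, you propose that an external shortcut between opposite arcs would yield ``a strictly shorter cycle through the chosen vertices (contradicting the minimality implicit in the construction)'' --- but no extremal choice was made anywhere, so there is no minimality to contradict; and short cycles, chords and triangles are perfectly consistent with $\rad G=r$, $\diam G\leq 2r-2$, so exhibiting a shorter cycle is not by itself a contradiction, nor does a shortcut between two vertices of $C$ force any eccentricity above $d$. For the counting step, ``fewer than three vertices per level on average'' cannot suffice: this paper itself exhibits a graph on $3r-1$ vertices with radius $r$ and circumference $2r-1$, so the hypothesis $|V(G)|\leq 3r-2$ must be exploited with single-vertex precision, and the claim that a common cut vertex on all $u$--$c$ geodesics ``would push $|V(G)|$ above $3r-2$'' is asserted rather than derived. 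As it stands the proposal is a plausible plan with the decisive steps missing and one step (the odd-length case) that the chosen construction cannot deliver.
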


Using Theorem \ref{T2:C2r_C2r+1} it is easy to find all nonisomorphic graphs of minimal order and specified radius and diameter (see \cite{Ost},\cite{Hav_Hrn_Mon_2}).

Let $G$ be a sun-graph with the cycle $C_{2r-1}$ ($r\geq 3$), with $r$ rays of length 1 and such that exactly two of its end-vertices have distance 3 (see Figure 3 for $r=5$). It is easy to see that $\rad G=r$, $|V(G)|=3r-1$ and $c(G)=2r-1$. 

\vskip 2mm
\begin{figure}[H]
\centering
\includegraphics[trim = 8cm 20.5cm 8cm 4.5cm, clip]{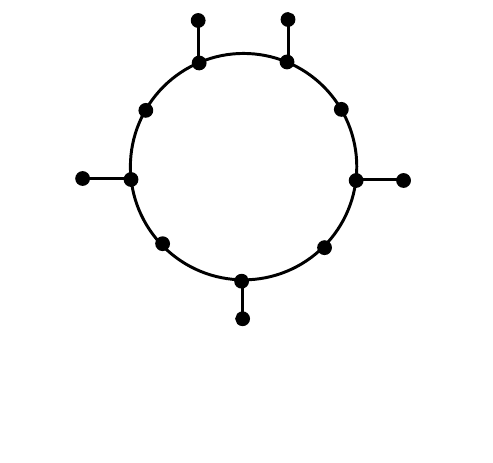}
\caption{}
\end{figure}
\vskip 2mm

\noindent
We can conclude that the bound $3r-2$ in Theorem 4 is the best possible (for $r\geq 3$).

\end{large}

\end{document}